\documentclass[11pt]{article}
\usepackage{latexsym,amssymb,amsmath,amsfonts,amsthm}
\usepackage{graphics}
\usepackage{graphicx}
\usepackage{mathrsfs}
\usepackage{subfigure}
\usepackage{bm}
\topmargin =0mm \headheight=0mm \headsep=0mm
\textheight =220mm \textwidth =160mm
\oddsidemargin=0mm\evensidemargin =0mm
\sloppy \brokenpenalty=10000

\newcommand{\R}{{\mat R}}

\newcommand{\N}{{\mat N}}
\newcommand{\C}{{\mat C}}
\newcommand{\ds}{\displaystyle}
\newcommand{\no}{\nonumber}
\newcommand{\be}{\begin{eqnarray}}
\newcommand{\ben}{\begin{eqnarray*}}
\newcommand{\en}{\end{eqnarray}}
\newcommand{\enn}{\end{eqnarray*}}

\newcommand{\pa}{\partial}

\newcommand{\ov}{\overline}

\newcommand{\Rt}{{\rm Re}}
\newcommand{\curl}{{\rm curl\,}}

\newcommand{\dive}{{\rm div\,}}
\newcommand{\g}{\gamma}
\newcommand{\G}{\Gamma}

\newcommand{\vep}{\varepsilon}
\newcommand{\Om}{\Omega}

\newcommand{\mat}{\mathbb}
\newcommand{\se}{\setminus}

\newcommand{\tr}{\triangle}

\newtheorem{theorem}{Theorem}[section]

\newtheorem{corollary}[theorem]{Corollary}

\begin{document}
\renewcommand{\theequation}{\arabic{section}.\arabic{equation}}
\title{\bf  Determining conductivity and embedded obstacles from partial boundary measurements}
\author{
Jiaqing Yang\thanks{School of Mathematics and Statistics, Xi'an Jiaotong University,
Xi'an, Shaanxi, 710049, China ({\tt jiaq.yang@xjtu.edu.cn; jiaqingyang@amss.ac.cn})}
}
\date{}

\maketitle


\begin{abstract}

In this paper, we consider an inverse conductivity problem on a bounded domain 
$\Omega\subset\mathbb{R}^n$, $n\geq2$, also
known as Electrical Impedance Tomography (EIT), for the case where unknown impenetrable obstacles are embedded into $\Omega$. We show that a piecewise-constant conductivity function and embedded obstacles
can be simultaneously recovered in terms of the local Dirichlet-to-Neumann map defined on an arbitrary 
small open subset of the boundary of the domain $\Omega$. The method depends on
the well-posedness of a coupled PDE-system constructed for the conductivity equations in the
$H^1$-space and some elementary a priori estimates for Harmonic functions.


\vspace{.2in}
{\bf Keywords:} Inverse conductivity problem, Dirichlet-to-Neumann map, partial data, embedded obstacle.
\end{abstract}

\section{Introduction}
\setcounter{equation}{0}

This paper is concerned with an inverse boundary value problem for the conductivity equation. We aim to recover the electrical conductivity of a bounded body in $\R^n$, $n\geq2$, as well as the possibly embedded  obstacles  by taking local boundary measurements.
Precisely, let $\Omega$ and $D$ be two bounded domains in $\R^n$ with Lipschitz continuous boundaries  such that $\ov{D}\subseteq \Omega$. In the physical situation, $D$ corresponds to the impenetrable obstacle inside $\Omega$ which consists of possibly several physical components satisfying different boundary conditions. Then the conductivity problem can be 
formulated in finding a weak solution $u\in H^1(\Omega\se\ov{D})$ such that 
\be\label{1.1a}
\left\{
 \begin{array}{ll}
\dive(\g \nabla u) = 0,& \text{in}\;\Omega\se\ov{D}\\
\;u = f,                        &\text{on}\;\pa \Omega\\
\;\mathcal{B}u = 0,    & \text{on}\;\pa D
\end{array}
\right.
\en
for any given voltage potential $f\in H^{1/2}(\partial\Omega)$.  In (\ref{1.1a}), the function 
$\g\in L^\infty(\Omega\se\ov{D})$ denotes the electric conductivity satisfying the ellipticity condition $0<c_0\leq\g\leq c_0^{-1}$ a.e. in the domain $\Omega\se\ov{D}$ for some positive constant 
$c_0\in\R_+$. The operator $\mathcal{B}$ indicates the boundary condition, described by 
$\mathcal{B}u: = u$ if $D$ is a sound-soft obstacle and 
$\mathcal{B} u: =\pa_\nu u+{\rm i}\lambda u$ with $\Rt(\lambda)\geq 0$ if $D$ is an imperfect obstacle. Here, $\nu$ denotes the exterior unit normal on the boundary $\pa D$. The
impedance boundary condition can be reduced to a Neumann boundary condition if $\lambda=0$.

Under the conditions, (\ref{1.1a}) can be proved to be well-posed in $H^1(\Omega\se\ov{D})$. Therefore,
we can define the well-defined Dirichlet-to-Neumann (D-N) map by
\be\label{1.2a}
\Lambda_{\g,D,\mathcal{B}} f = \g\frac{\pa u}{\pa\nu}\Big|_{\pa\Omega} \in H^{-1/2}(\pa\Omega).
\en
If measurements are taken only on an open subset of $\partial\Omega$, denoted by $\Gamma$, then
the relevant map in (\ref{1.2a}) is called the local D-N map. In this case, we write the map for  
$\Lambda^\Gamma_{\g,D,\mathcal{B}}$ which indicates the dependence on the measurement boundary 
$\Gamma$. Then,
the inverse conductivity problem is to determine all unknown $\g, D$ and $\mathcal{B}$ by taking full measurements  $\Lambda_{\g,D,\mathcal{B}}$ or partial measurements 
$\Lambda^\Gamma_{\g,D,\mathcal{B}}$.
This kind of problems can find important applications in diverse fields such as geophysical exploration, nondestructive testing and medical imaging. The first mathematical formulation on the inverse problem can date back to A. P. Calder\'{o}n \cite{Ca80}, where the uniqueness was
considered in recovering $\g$ by the knowledge of $\Lambda_{\g,D,\mathcal{B}}$ for the case 
when no obstacles are embedded 
in the domain $\Omega$, i.e., $D=\emptyset$. Substantial progress has been made 
in this direction since then; see fundamental papers like \cite{KV84}, \cite{KV85}, \cite{Na96}, \cite{SU87} in both the two-dimensional and three-dimensional cases.
It was shown in \cite{SU87} that the global uniqueness theorem was proved in determining 
a sufficiently smooth conductivity or potential for dimension $n\geq3$, by taking measurements over the boundary.
This result was later extended in \cite{B96}, \cite{BT03}, \cite{HT13}, \cite{PPU03} and related references quoted there to the cases of less regularity conductivity or potential. Moreover, in dimension $n=2$, the global
uniqueness theorem was also proved in \cite{Bu08}, \cite{Na96} for a smooth conductivity and \cite{AP06} for the relaxed regularity conductivity, by taking full boundary data.
Compared to the full data case, the inverse conductivity problem by the local D-N map
has been dealt with more recently. We refer to \cite{AV05} \cite{BU02}, \cite{Is07}, \cite{IUY10}, \cite{KSU07} and related references quoted there in both three dimensions
and two dimensions.

The works mentioned above are mainly based on constructing suitable complex geometrical optics solutions associated with the conductivity equation or Schr\"{o}dinger equation. With a different version, a local uniqueness result was proved 
\cite{AV05} on the conductivity, by assuming $\g$ is a piecewise-constant function
 in a known finite partition of the domain; see also
\cite{D98} for the direct geophysical setting of the problem.

We remark that in all above works except \cite{IUY10}, the inverse conductivity problems were dealt with  
only for the case without embedded obstacles in the body. In the current work,
we shall study the inverse problem with impenetrable obstacles embedded in 
$\Omega$, which makes the problem become more challenging since unknown embedded obstacles need to be also recovered by full or partial measurements over the boundary. If $D$ is known to be a sound-soft
obstacle in advance, it has been shown in \cite{IUY10} that the conductivity $\g$ can be uniquely recovered in
the domain $\Omega\se\ov{D}$ by assuming the partial data. 
We also mention a related work \cite{YZ17,YZZ13} from Yang {\em et al.} in the field of the inverse scattering, in which the unique recovery of embedded  obstacles and the surrounding inhomogeneous medium described by a piecewise-constant refractive index were considered.
A novel version was proposed in \cite{YZ17,YZZ13} relying on the $L^2$-well-posedness of the interior transmission problem or its modified version for the 
Helmholtz equations in a sufficiently small domain, which were recently extended to other cases such as the inverse fluid-solid interaction problem \cite{QYZ17} and 
the inverse cavity problem \cite{QY17}. In this paper, motivated by the previous works of the author, 
we shall study the inverse conductivity problem by {\em a priori} assuming that the conductivity
is a piecewise-constant function in an unknown finite partition of the domain. Under this assumption,
we are able to prove that the local D-N map defined on an arbitrary open subset of the boundary
can uniquely determine all unknown $\g$, $D$ and $\mathcal{B}$. 
The approach proposed in this work depends only on the well-posedness of a coupled PDE-system
associated with the conductivity equations in the $H^1$ space, which is  
essentially simple and elementary. Moreover, our approach always works whenever in the two dimensional case or a more higher dimensional case.

Next let us outline the underlying main results of this work. We first present some assumptions pertaining to the domain partition. Assume that the bounded domain $\Omega$ consists of a finite number
of disjoint Lipschitz subdomains $\Omega_\ell$, $\ell=1,2,\cdots,N$, such that
$\ov{\Omega}=\bigcup_{1\leq\ell\leq N}\ov{\Omega_\ell}$. And the electrical conductivity $\g$ is considered to be 
constant in $\Omega_\ell$, $\ell=1,2,\cdots,N$ satisfying a natural condition that $c_{\ell_1}\not=c_{\ell_2}$ if
the interaction $\partial\Om_{\ell_1}\cap\partial\Om_{\ell_2}$ contains a non-empty open subset in the ($n-1$)-dimensional manifold of $\R^n$.
Moreover, we also assume that a smooth obstacle
$D$ is embedded inside some subdomain denoted by $\Omega_{\ell_0}$ $(1\leq\ell_0\leq N)$, i.e., 
$\ov{D}\subset\Omega_{\ell_0}$. That is, the material surrounding the obstacle $D$ is assumed to be 
homogeneous. Under the above considerations, the conductivity $\g$ can be described as the following 
function 
\be\label{1.3a}
\g(x) = \sum_{\ell=1}^{N}c_\ell\cdot\chi_{\Omega_\ell}(x), \quad{\rm for\;almost\;every\;}x
\in\Omega\se\ov{D},
\en
where $c_\ell$, $\ell=1,2,\cdots,N$, are unknown real positive constants, and $\chi_{\Omega_\ell}$ denotes the characteristic function in the subdomain $\Omega_\ell$, which is defined by $1$ in $\Omega_\ell$ and $0$ otherwise. 
For the non-empty open subset $\G$ of the boundary $\partial\Omega$, we define the following
partial Cauchy data 
\be\label{1.4a}
\mathcal{C}_{\g,D,\mathcal{B}}:
=\{(f|_{\G},(\Lambda_{\g,D,\mathcal{B}} f) |_{\G}): f\in H^{\frac{1}{2}}(\partial\Omega)\;
{\rm with}\; f=0\;{\rm on}\;\partial\Omega\se\ov{\G}\}
\en
which will be used to recover internal properties of $\Omega$ in the inverse conductivity problem.
For convenience, we use symbols
$\mathcal{C}_{\g_j,D_j,\mathcal{B}_j}$, $j=1,2$, indicating the dependence on two classes of
different conductivities and embedded obstacles. Based on (\ref{1.3a})-(\ref{1.4a}), we now formulate
 our main result in the following theorem.     
\begin{theorem}\label{thm1.1}
If $\mathcal{C}_{\g_1,D_1,\mathcal{B}_1}=\mathcal{C}_{\g_2,D_2,\mathcal{B}_2}$, then $\g_1=\g_2$, $D_1=D_2$ and $\mathcal{B}_1 = \mathcal{B}_2$.
\end{theorem}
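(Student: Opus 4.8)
The plan is to translate the hypothesis into matched Cauchy data on $\G$ and then propagate the coincidence inward, exploiting the decisive feature of (\ref{1.3a}): since $\g$ is constant on each $\Om_\ell$, every solution of (\ref{1.1a}) is \emph{harmonic} inside each subdomain, and neighboring subdomains are glued by the transmission conditions (continuity of $u$ and of $\g\,\pa_\nu u$). Fix any $f\in H^{1/2}(\pa\Om)$ supported in $\ov{\G}$ and let $u_1,u_2$ be the corresponding solutions for $(\g_1,D_1,\mathcal{B}_1)$ and $(\g_2,D_2,\mathcal{B}_2)$; by (\ref{1.4a}) the identity $\mathcal{C}_{\g_1,D_1,\mathcal{B}_1}=\mathcal{C}_{\g_2,D_2,\mathcal{B}_2}$ says precisely that $u_1=u_2=f$ on $\pa\Om$ and $\g_1\pa_\nu u_1=\g_2\pa_\nu u_2$ on $\G$. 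To fix the conductivity in the layer $\Om_{\ell^*}$ meeting $\G$, I would use that both $u_1,u_2$ are harmonic there, so on $\G$ the leading boundary behavior of each map $f\mapsto\pa_\nu u_j$ is that of the Laplacian and is therefore independent of the constant $c^{(j)}_{\ell^*}$ and of the interior structure. This is the \emph{elementary a priori estimate for harmonic functions} advertised in the abstract. Testing the relation $c^{(1)}_{\ell^*}\pa_\nu u_1=c^{(2)}_{\ell^*}\pa_\nu u_2$ on highly oscillatory data, for which $u_j$ concentrates near $\G$ and $\pa_\nu u_j$ behaves like the half-space extension, forces $c^{(1)}_{\ell^*}=c^{(2)}_{\ell^*}$; with the constants equal the \emph{full} Cauchy data agree on the open piece $\G$, so $w:=u_1-u_2$ is harmonic with vanishing Cauchy data on $\G$ and unique continuation gives $u_1\equiv u_2$ throughout $\Om_{\ell^*}$.

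Next I would run a finite layer-stripping induction. Once $u_1\equiv u_2$ is known on a subdomain, the transmission conditions transport matched Dirichlet data and matched weighted fluxes to each adjacent interface, which then plays the role of $\G$ for the neighboring subdomain; the same symbol argument equates the constants there and unique continuation propagates $u_1\equiv u_2$ across. Because the partition is finite and the separation hypothesis ($c_{\ell_1}\neq c_{\ell_2}$ across any shared $(n-1)$-face) forbids spurious merging, iterating yields $\g_1=\g_2$ as functions on $\Om\se(\ov{D_1}\cup\ov{D_2})$, whence their discontinuity sets, and hence the two partitions, coincide.

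With $\g_1=\g_2=:\g$ in hand and both obstacles lying in the homogeneous subdomain $\Om_{\ell_0}$, I would recover $D$ by contradiction. Suppose $D_1\neq D_2$ and let $V$ be a component of $D_2\se\ov{D_1}$. In $V$ the function $u_1$ is harmonic, while $\pa V$ splits into a piece $S\subset\pa D_2$ and a piece $S'\subset\pa D_1$. Approaching $S$ from the region where $u_1\equiv u_2$ has already been established, $u_1$ inherits the condition $\mathcal{B}_2 u_1=0$ there, while on $S'$ it satisfies $\mathcal{B}_1 u_1=0$. Thus $u_1$ solves a \emph{homogeneous} mixed boundary value problem on $V$, and the $H^1$-well-posedness of this coupled PDE-system — for which the sign condition $\Rt(\la)\geq0$ supplies the needed energy estimate — forces $u_1\equiv0$ in $V$. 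Unique continuation then gives $u_1\equiv0$ in the connected set $\Om_{\ell_0}\se\ov{D_1}$, hence $u_1\equiv0$ up to $\pa\Om$ and $f\equiv0$, contradicting the freedom to choose $f\neq0$. Therefore $D_1=D_2=:D$, and $\mathcal{B}_1=\mathcal{B}_2$ follows by comparing the two homogeneous conditions on the common boundary $\pa D$: if the types or the impedance coefficients differed, subtracting them would force the shared solution $u$ to have vanishing Cauchy data on $\pa D$, and hence to vanish identically, again a contradiction.

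I expect the obstacle step to be the main obstacle. The delicate points are making the coincidence $u_1\equiv u_2$ rigorous up to the moving boundary $\pa(D_1\tr D_2)$, correctly assigning the inherited conditions on $S$ and $S'$ in the $H^1$ (Lipschitz-interface) setting, and ensuring that the well-posedness of the coupled system on $V$ holds uniformly for both the sound-soft and the impedance cases. This is exactly where the coupled-PDE-system well-posedness replaces the complex-geometrical-optics machinery of the classical theory, so its careful formulation, together with the accompanying unique-continuation and regularity bookkeeping, is where the real work lies.
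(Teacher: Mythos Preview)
Your proposal takes a genuinely different route from the paper, and in one key place it misreads what the ``coupled PDE-system'' is.

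\textbf{What the paper actually does.} The paper's coupled PDE-system is \emph{not} a mixed boundary value problem on $V\subset D_1\triangle D_2$; it is the two-equation system (\ref{2.1}) on a small domain $D_0$, coupling $\dive(a_1\nabla\Psi_1)-b_1\Psi_1=\rho_1$ and $\dive(a_2\nabla\Psi_2)-b_2\Psi_2=\rho_2$ through Cauchy-type interface data $\Psi_1-\Psi_2=f_1$, $a_1\pa_\nu\Psi_1-a_2\pa_\nu\Psi_2=f_2$. Its well-posedness (Corollary \ref{thm2.2}) is the engine of the \emph{conductivity} step: one feeds it solutions $u_j^{(1)},u_j^{(2)}$ generated by boundary data $f_j=\chi_\delta\Phi_n(\cdot,x_j)$ with the pole $x_j\to x_*\in\G$, checks that the right-hand sides stay bounded uniformly in $j$, and obtains a uniform $H^1$ bound contradicting $\|f_j\|_{H^{1/2}}\to\infty$ unless $\g_1=\g_2$ near $x_*$. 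The propagation to interior interfaces is done by repeating the same blow-up argument with the Dirichlet--Green functions $G_\ell(\cdot,x_j)$ whose poles approach the interface. The obstacle step is then dispatched in one line by citing \cite{KP98}.

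\textbf{Where your approach differs and where it has a gap.} Your idea of reading $c_{\ell^*}^{(1)}=c_{\ell^*}^{(2)}$ from the leading symbol of the partial DtN map on $\G$ is the Kohn--Vogelius boundary-determination mechanism; it is a legitimate alternative at the exterior boundary. The gap is at the interior interfaces: you write that the adjacent interface ``then plays the role of $\G$'' and you can rerun ``the same symbol argument.'' But on an internal interface you cannot prescribe highly oscillatory Dirichlet data; you only control the Cauchy traces of solutions whose Dirichlet data are supported on $\G$. To make your layer-stripping work you must show that these traces are dense enough (a Runge-type approximation, or an explicit construction of solutions concentrating at the interface). The paper avoids this density issue altogether by using singular Green-function sources placed near the interface; that is what the ``coupled PDE-system'' well-posedness buys. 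Your obstacle argument, by contrast, is essentially the standard one from \cite{KP98} that the paper merely cites, so there you are simply supplying the details rather than diverging.
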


The remaining part of the paper is organized as follows. In Sections \ref{sec2} we introduce an 
auxiliary boundary value problem associated with the modified conductivity equation and prove its
well-posedness in a suitable Sobolev space by employing the variational method. 
In Section \ref{sec4} we propose 
a novel technique for the inverse conductivity problem, also known as Calder\'{o}n problem, of determining the conductivity of a 
piecewise-constant function from Cauchy data measured on an arbitrary open subset of the boundary.

\section{A related PDE-model} \label{sec2}
\setcounter{equation}{0}

In this section, we introduce a related PDE-system in $D_0$ described by two modified conductivity equations which are coupled by a pair of Cauchy conditions on the boundary $\partial D_0$:
\be\label{2.1}
\left\{
 \begin{array}{ll}
\dive(a_1(x) \nabla u_1)-b_1 u_1=\rho_1,& \text{in}\;D_0,\\
\dive(a_2(x) \nabla u_2)- b_2 u_2=\rho_2,& \text{in}\; D_0,\\ 
u_1-u_2=f_1,& \text{on}\;\partial D_0,\\
a_1(x)\partial_\nu u_1-a_2(x)\partial_\nu u_2=f_2,&\text{on}\;\partial D_0,\\
\end{array}
\right.
\en
where $D_0$ is a simply connected, bounded domain with the Lipschitz boundary $\partial D_0$, and 
$\rho_1,\rho_2\in L^2(D_0)$, $f_1\in H^{\frac{1}{2}}(\partial D_0)$ and $f_2\in H^{-\frac{1}{2}}(\partial D_0)$. In (\ref{2.1}), $a_j$, $j=1,2$, are assumed to two bounded measured
functions satisfying $\infty>c_2\geq a_j\geq c_1>0$ for almost every $x\in D_0$, and 
$b_j$, $j=1,2$, are also assumed to be two positive constants. 

Under some appropriate assumptions on $a_j,b_j$, we shall next prove that (\ref{2.1}) is well-posed
in the classical $H^1(D_0)$ space for the given data $\rho_1,\rho_2,f_1$ and $f_2$.  We remark that the well-posedness of 
(\ref{2.1}) will play an important role in the inverse problem of determining a piecewise-constant conductivity $\g$. Indeed, as shown in the proof of Theorem \ref{thm1.1} in next section, the regularity of the solutions of (\ref{1.1a}) with a family of special boundary data $f$ having a singularity like $1/|x-z|$ in the three-dimensional case or $1/\ln |x-z|$ in the two-dimensional case
 can be exactly improved uniformly by analyzing the solvability of (\ref{2.1}). This will lead to a contradiction. The detailed discussions can be found in next section.

In order to study the existence of a solution to (\ref{2.1}), we shall adopt the suggestion from \cite{CC06} with 
employing a technical variational approach for the coupled Helmholtz equations. To this end, we introdue the Hilbert space $X_{a_2}$ defined by 
\be\label{2.2}
X_{a_2}:
=\{\bm{v}\in L^2(D_0)^3: \dive(a_2(x)\bm{v})\in L^2(D_0),\;\curl\bm{v}=0\;\;\text{in\;}D_0\}
\en
under the product 
$(\bm{v}_1,\bm{v}_2)_{X_{a_2}}:=(\bm{v}_1,\bm{v}_2)_{L^2(D_0)^3}
+(\dive(a_2\bm{v}_1),\dive(a_2\bm{v}_2))_{L^2(D_0)}$ for $\bm{v}_1,\bm{v}_2\in X_{a_2}$. Obviously, it makes sense due to the assumption on $a_2$.
The norm on $X_{a_2}$ is thus given by 
\ben
\| \bm{v}\|^2_{X_{a_2}}:=\|\bm{v}\|^2_{L^2(D_0)^2}+\|\dive(a_2\bm{v})\|^2_{L^2(D_0)}.
\enn
If $u_2\in H^1(D_0)$ is the solution of the second equation in (\ref{2.1}), we 
let $\bm{u}_2:=\nabla u_2$ in $D_0$. Then, one has $\bm{u}_2\in L^2(D_0)^3$ satisfying  
\be\label{2.3}
\dive(a_2\bm{u}_2)=b_2u_2+\rho_2\in L^2(D_0)\quad\text{and}\quad \curl\bm{u}_2=0\;\;\text{in\;}D_0,
\en
which yields that $\bm{u}_2\in X_{a_2}$. This fact motivates us to find the solution 
$(u_1,\bm{u}_2)$ of (\ref{2.1}) with the second equation replaced by (\ref{2.3}) on a new product space $H^1(D_0)\times X_{a_2}(D_0)$. By the Green's theorems, it first follows that 
\ben
&&\int_{D_0}\dive(a_2\bm{u}_2)\cdot\dive(a_2\bm{v}){\rm d}x
 =\int_{D_0}(b_2u_2+\rho_2)\dive(a_2\bm{v}){\rm d}x \\
&&\qquad\qquad\qquad\quad=\int_{\pa D_0}a_2 b_2(\nu\cdot\bm{v})u_2{\rm d}s(x)
          -\int_{D_0}a_2 b_2(\bm{v}\cdot\bm{u}_2){\rm d}x
        +\int_{D_0}\rho_2\dive(a_2\bm{v}){\rm d}x\\
&&\qquad\qquad\qquad\quad=\int_{\pa D_0}a_2 b_2(\nu\cdot\bm{v})(u_1-f_1){\rm d}s(x)
      -\int_{D_0}a_2 b_2(\bm{v}\cdot\bm{u}_2){\rm d}x 
       +\int_{D_0}\rho_2\dive(a_2\bm{v}){\rm d}x
\enn
and 
\ben
&&\int_{D_0}(\dive(a_1\nabla u_1)-b_1u_1)\varphi{\rm d}x
=\int_{D_0}\rho_1\varphi{\rm d}x\\
&&\qquad\qquad\qquad\qquad\quad=\int_{\pa D_0}a_1(\partial_{\nu}u_1)\varphi{\rm d}s(x)
       -\int_{D_0}a_1\nabla u_1\cdot\nabla \varphi{\rm d}x-\int_{D_0}b_1 u_1\varphi{\rm d}x\\
&&\qquad\qquad\qquad\qquad\quad=\int_{\pa D_0}a_2(\nu\cdot\bm{u}_2+f_2)\varphi{\rm d}s(x)
       -\int_{D_0}a_1\nabla u_1\cdot\nabla \varphi{\rm dx}-\int_{D_0}b_1 u_1\varphi{\rm d}x
\enn
for each $\bm{v}\in X_{a_2}$ and 
 $\varphi\in H^1(D_0)$, where the integral on $\partial D_0$ is understood in the dual sense of 
$\langle\cdot,\cdot \rangle_{H^{-1/2}\times H^{1/2}}$ due to the term 
$\nu\cdot\bm{v}\in H^{-1/2}(\partial D_0)$ for any $\bm{v}\in X_{a_2}$.

Combining above equalities shows that the solution $(u_1,u_2)$ to (\ref{2.1}) 
satisfies following variation formulation 
\be\label{2.4}
\mathbb{A}(u_1,\bm{u}_2;\varphi,\bm{v})=\mathbb{F}(\varphi;\bm{v}), 
\qquad \text{for}\;(\varphi,\bm{v})\in H^1(D_0)\times X_{a_2},
\en
where the sesquilinear form $\mathbb{A}(\cdot;\cdot)$ 
 and the functional $\mathbb{F}(\cdot)$ on $H^1(D_0)\times X_{a_2}(D_0)$ are defined by
\ben
\mathbb{A}(\varphi,\bm{v}):
&=&\int_{D}(\dive(a_2\bm{u}_2)\cdot\dive(a_2\ov{\bm{v}})
     +a_2 b_2\bm{u}_2\cdot\ov{\bm{v}}){\rm d}x\\
&&+\int_{D}(a_1\nabla u_1\cdot\nabla \ov{\varphi}+b_1u_1\ov{\varphi}){\rm d}x
      -\int_{\pa D}a_2 b_2(\nu\cdot\ov{\bm{v}})  u_1{\rm d}s
      -\int_{\pa D}a_2(\nu\cdot\bm{u}_2)\ov{\varphi}{\rm d}s
 \enn
 and
 \ben
\mathbb{F}(\varphi,\bm{v}):
&=&\int_{D}\rho_2\dive(a_2\ov{\bm{v}}){\rm d}x-\int_{D}\rho_1\ov{\varphi}{\rm d}x
       +\int_{\pa D}a_2f_2\ov{\varphi}{\rm d}s-\int_{\pa D}a_2b_2(\nu\cdot\ov{\bm{v}})f_1{\rm d}s,
\enn
respectively.

Conversely, 
on the other hand, we suppose that $(u_1,\bm{u}_2)$ is one solution to (\ref{2.4}). It follows from the condition $\curl\bm{u}_2=0$ that there
exists some function denoted by $u_2$ such that $u_2\in H^1(D_0)$ with $\bm{u}_2=\nabla(u_2+c)$ for $c\in\C$, since $D_0$ is a simply connected, bounded domain.
Following similar arguments to \cite[Theorem 6.5]{CC06}, we can choose some constant $c_0\in\C$
so that $u_2+c_0$ is the solution to (\ref{2.1}). Thus, the solvability of (\ref{2.1}) is equivalent to that of the variational formulation (\ref{2.4}).

\begin{theorem}\label{thm2.1}
Assume $a_j(x),b_j>c_0>0$ for $j=1,2$. Furthermore, if there exists some positive constant $\vep_0>0$ with  $0<\vep_0<1$ such that 
\ben
\vep_0 b_1>\left( \frac{1+b_2}{2}\right)^2\qquad{\rm and}\qquad
\vep_0\cdot\left(\inf_{x\in D_0}\frac{a_1(x)}{a_2(x)}\right)>\left( \frac{1+b_2}{2}\right)^2,
\enn
then the variational formulation $(\ref{2.4})$ has a unique solution in the sense that 
\be\label{2.5}
\|u_1\|_{H^1(D_0)}+\|\textbf{u}_2\|_{X_{a_2}}
\leq C(\|\rho_1\|_{L^2(D_0)}+\|\rho_2\|_{L^2(D_0)}+\|f_1\|_{H^{1/2}(\partial D_0)}
       +\|f_2\|_{H^{-1/2}(\partial D_0)}).
\en
\end{theorem}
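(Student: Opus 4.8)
The plan is to establish the unique solvability of the variational problem (\ref{2.4}) by the Lax--Milgram lemma on the product Hilbert space $\mathcal{H}:=H^1(D_0)\times X_{a_2}$, equipped with $\|(\varphi,\bm{v})\|_{\mathcal{H}}^2=\|\varphi\|_{H^1(D_0)}^2+\|\bm{v}\|_{X_{a_2}}^2$. Writing $U=(u_1,\bm{u}_2)$ for the trial pair and $V=(\varphi,\bm{v})$ for the test pair, I regard $\mathbb{A}=\mathbb{A}(U,V)$ as a sesquilinear form and $\mathbb{F}=\mathbb{F}(V)$ as a bounded antilinear functional on $\mathcal{H}$. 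Once (i) boundedness of $\mathbb{A}$ and $\mathbb{F}$ and (ii) coercivity $\Rt\,\mathbb{A}(U,U)\geq\kappa\|U\|_{\mathcal{H}}^2$ are in hand, Lax--Milgram produces a unique solution $U$ with $\|U\|_{\mathcal{H}}\leq\kappa^{-1}\|\mathbb{F}\|_{\mathcal{H}'}$, which is exactly the estimate (\ref{2.5}); the solvability of (\ref{2.1}) then follows from the equivalence recorded above, with $u_2$ recovered from $\bm{u}_2=\nabla(u_2+c_0)$.

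Boundedness I would dispatch first, as it is routine. All volume terms are controlled by Cauchy--Schwarz together with $c_1\leq a_j\leq c_2$. For the two boundary integrals I would use that $a_2\bm{v}\in H(\dive,D_0)$ whenever $\bm{v}\in X_{a_2}$, so that the normal-trace theorem gives $\|\nu\cdot(a_2\bm{v})\|_{H^{-1/2}(\partial D_0)}\leq C\|\bm{v}\|_{X_{a_2}}$, while the usual trace theorem gives $\|\varphi\|_{H^{1/2}(\partial D_0)}\leq C\|\varphi\|_{H^1(D_0)}$; pairing these in the $H^{-1/2}$--$H^{1/2}$ duality yields $|\mathbb{A}(U,V)|\leq C\|U\|_{\mathcal{H}}\|V\|_{\mathcal{H}}$ and the stated bound on $\mathbb{F}$.

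The coercivity estimate is the heart of the argument and the step I expect to be the main obstacle, since the two boundary coupling integrals are not sign-definite and a priori scale like $\|U\|_{\mathcal{H}}^2$, so a naive trace bound cannot absorb them. I would take $V=U$, so that the four diagonal contributions $\|\dive(a_2\bm{u}_2)\|_{L^2}^2$, $\int_{D_0}a_2b_2|\bm{u}_2|^2$, $\int_{D_0}a_1|\nabla u_1|^2$ and $\int_{D_0}b_1|u_1|^2$ are all nonnegative. The decisive device is to eliminate the boundary integrals by Green's identity: using $\int_{\partial D_0}(\nu\cdot a_2\bm{u}_2)\ov{u_1}\,ds=\int_{D_0}\dive(a_2\bm{u}_2)\ov{u_1}\,dx+\int_{D_0}a_2\bm{u}_2\cdot\nabla\ov{u_1}\,dx$ and its conjugate counterpart carrying the factor $b_2$, taking real parts collapses the two boundary terms into $-(1+b_2)\Rt\int_{D_0}\dive(a_2\bm{u}_2)\ov{u_1}\,dx$ and $-(1+b_2)\Rt\int_{D_0}a_2\bm{u}_2\cdot\nabla\ov{u_1}\,dx$.

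It now remains to absorb these two volume cross terms by Young's inequality, keeping each paired with the diagonal block sharing its differential structure. The first cross term I would control by $\|\dive(a_2\bm{u}_2)\|_{L^2}^2$ and $\int_{D_0}b_1|u_1|^2$, and the hypothesis $\vep_0 b_1>((1+b_2)/2)^2$ is precisely what leaves a strictly positive remainder; the second cross term I would control pointwise by $\int_{D_0}a_1|\nabla u_1|^2$ and $\int_{D_0}a_2b_2|\bm{u}_2|^2$, where $\vep_0\inf_{D_0}(a_1/a_2)>((1+b_2)/2)^2$ does the same job, with $\vep_0$ playing the role of the Young parameter. Invoking the ellipticity lower bounds $a_j,b_j>c_0$ to convert the weighted diagonal terms back into $\|u_1\|_{H^1}^2+\|\bm{u}_2\|_{X_{a_2}}^2$ then gives $\Rt\,\mathbb{A}(U,U)\geq\kappa\|U\|_{\mathcal{H}}^2$ for some $\kappa>0$, completing the coercivity and hence the theorem. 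The only genuinely delicate point is this absorption; the two displayed conditions on $\vep_0$ are engineered exactly so that, after the Green-identity rewriting, the indefinite coupling is dominated by the coercive diagonal.
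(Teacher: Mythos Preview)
Your proposal is correct and follows essentially the same route as the paper's proof: Lax--Milgram on $H^1(D_0)\times X_{a_2}$, boundedness of $\mathbb{F}$ via Cauchy--Schwarz and the $H(\dive)$ normal-trace theorem, and coercivity obtained by setting $V=U$, rewriting the two boundary coupling terms as volume integrals via the divergence identity, and then absorbing the resulting $(1+b_2)$-weighted cross terms by Young's inequality with parameter $\vep_0$. The only cosmetic difference is that you explicitly note boundedness of $\mathbb{A}$ and work with $\Rt\,\mathbb{A}(U,U)$, whereas the paper bounds $|\mathbb{A}(U,U)|$ directly; the substance is identical.
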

\begin{proof}
Using Schwarz's inequality with the trace theorem, it is first concluded that 
\ben
&&|\mathbb{F}(\varphi,\textbf{u}_2)|\\
&\leq&\|\rho_1\|_{L^2(D_0)}\|\varphi\|_{L^2(D_0)}
          +\|\rho_2\|_{L^2(D_0)}\|\dive(a_2\textbf{v})\|_{L^2(D_0)} \\
&&\quad  +C\|\varphi\|_{H^1(D_0)}\|f_2\|_{H^{-1/2}(\partial D_0)}
  +C(\|a_2 \textbf{v}\|_{L^2(D_0)}
         +\|\dive(a_2\textbf{v})\|_{L^2(D_0)})\|f_1\|_{H^{1/2}(\partial D_0)}\\
&\leq& C(\|\rho_1\|_{L^2(D_0)}+\|\rho_2\|_{L^2(D_0)}
               +\|f_1\|_{H^{1/2}(\partial D_0)}
       +\|f_2\|_{H^{-1/2}(\partial D_0)})(\|\varphi\|_{H^1(D_0)}+\|\textbf{u}_2\|_{X_{a_2}}),
\enn
which yields that $\mathbb{F}(\cdot,\cdot)$ defines a bounded linear functional on the product space
$H^1(D_0)\times X_{a_2}$.

Let $(\varphi;\bm{v}):=(u_1,\bm{u}_2)$. Then, it holds that 
\be\no
|\mathbb{A}(u_1,\bm{u}_2;u_1,\bm{u}_2)|
&=&\int_{D_0}|(\dive(a_2\bm{u}_2)|^2+a_2 b_2|\bm{u}_2|^2){\rm d}x
 +\int_{D_0}(a_1|\nabla u_1|^2+b_1|u_1|^2){\rm d}x \\ \label{2.6}
&& -\int_{\pa D_0}a_2(\nu\cdot\bm{u}_2)\ov{u_1}{\rm d}s(x)
 -\int_{\pa D_0}a_2b_2(\nu\cdot\ov{\bm{u}_2}) u_1{\rm d}s(x).
\en
In (\ref{2.6}), for the integrals on $\partial D$, we apply the divergence theorem to obtain 
\be\label{2.7}
\int_{\pa D_0}a_2(\nu\cdot\bm{u}_2)\ov{u_1}{\rm d}s(x)
&=&\int_{D_0}\dive(a_2\bm{u}_2)\ov{u_1}{\rm d}x+\int_{D_0}a_2\bm{u}_2\cdot\nabla \ov{u}_1{\rm d}x.
\en
Taking the equality (\ref{2.7}) into (\ref{2.6}) deduces 
\be\no
|\mathbb{A}(u_1,\bm{u}_2;u_1,\bm{u}_2)|
&\geq&\int_{D_0}(|\dive(a_2\bm{u}_2)|^2+b_1|u_1|^2-(1+b_2)|\dive(a_2\bm{u}_2)||u_1|){\rm d}x \\ \label{2.7a}
&& +\int_{D_0}a_2\left(|\bm{u}_2|^2+\left(\frac{a_1}{a_2}\right)|\nabla u_1|^2-(1+b_2)|\bm{u}_2||\nabla u_1|\right){\rm d}x.       
\en
For $0<\vep_0<1$, by the mean value inequality, it holds 
\be\label{2.7b}
&&(1+b_2)|\bm{u}_2||\nabla u_1|
\leq 
\vep_0|\bm{u}_2|^2+\frac{(1+b_2)^2}{4\vep_0}|\nabla u_1|^2,\\ \label{2.7c}
&&(1+b_2)|\dive(a_2\bm{u}_2)||u_1|
\leq \vep_0|\dive(a_2\bm{u}_2)|^2+\frac{(1+b_2)^2}{4\vep_0}|u_1|^2.
\en
Define three constants $c_4,c_5$ and $c_6$ by 
$c_3:=b_1-(1+b_2)^2/(4\vep_0)$, $c_4:=\inf_{x\in D_0}(a_1/a_2)-(1+b_2)^2/(4\vep_0)$
and $c_5:=\min\{(1-\vep_0), c_0(1-\vep_0), c_3, c_0c_4\}$, respectively.
It follows from the assumptions on $a_j,b_j$ for $j=1,2$, that $c_3, c_4$ and $c_5$ are all positive.
Thus, we conclude from (\ref{2.7a})-(\ref{2.7c}) that 
\ben
|\mathbb{A}(u_1,\bm{u}_2;u_1,\bm{u}_2)|
&\geq & 
c_5\int_{D_0}(|\dive(a_2\bm{u}_2)|^2+|u_1|^2+
|\bm{u}_2|^2+|\nabla u_1|^2){\rm d}x,
\enn
which means that $\mathbb{A}(\cdot;\cdot)$ is coercive on the space 
$H^1(D_0)\times X_{a_2}$. 
Therefore, the existence
of a solution to (\ref{2.4}) with the stable estimate (\ref{2.5}) follows from Lax-Milgram theorem.
\end{proof}

\begin{corollary}\label{thm2.2}
Let $a_j,b_j$ satisfy the conditions in Theorem $\ref{thm2.1}$. Then, the boundary value problem
$(\ref{2.1})$ has a unique solution $(u_1,u_2)\in H^1(D_0)\times H^1(D_0)$ such that 
\be\label{2.8}
\|u_1\|_{H^1(D_0)}+\|u_2\|_{H^1(D_0)}
\leq C(\|\rho_1\|_{L^2(D_0)}+\|\rho_2\|_{L^2(D_0)}+\|f_1\|_{H^{\frac{1}{2}}(\partial D_0)}
       +\|f_2\|_{H^{-\frac{1}{2}}(\partial D_0)}).
\en
\end{corollary}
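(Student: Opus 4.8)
The plan is to read off Corollary~\ref{thm2.2} from Theorem~\ref{thm2.1} together with the equivalence between problem~(\ref{2.1}) and the variational formulation~(\ref{2.4}) that was established just before the theorem. By Theorem~\ref{thm2.1}, the formulation~(\ref{2.4}) has a unique solution $(u_1,\bm{u}_2)\in H^1(D_0)\times X_{a_2}$ satisfying the bound~(\ref{2.5}). Since $\curl\bm{u}_2=0$ in the simply connected domain $D_0$, there is a scalar potential $u_2\in H^1(D_0)$, unique up to an additive constant, with $\bm{u}_2=\nabla u_2$; following \cite[Theorem~6.5]{CC06} the additive constant can be fixed so that the pair $(u_1,u_2)$ actually solves~(\ref{2.1}). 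This gives existence. For uniqueness I would run the correspondence backwards: any $H^1\times H^1$ solution $(u_1,u_2)$ of~(\ref{2.1}) yields, upon setting $\bm{u}_2:=\nabla u_2$, a solution of~(\ref{2.4}), so the uniqueness clause of Theorem~\ref{thm2.1} forces both $u_1$ and $\bm{u}_2=\nabla u_2$ to be uniquely determined.

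The remaining task is to upgrade~(\ref{2.5}) to the full $H^1$-estimate~(\ref{2.8}) for $u_2$. The norm $\|u_1\|_{H^1(D_0)}$ is already controlled by the right-hand side of~(\ref{2.8}) through~(\ref{2.5}), so I only need to bound $\|u_2\|_{H^1(D_0)}$, which I would split into its gradient and $L^2$ parts. The gradient part is immediate, since
\[
\|\nabla u_2\|_{L^2(D_0)}=\|\bm{u}_2\|_{L^2(D_0)}\leq\|\bm{u}_2\|_{X_{a_2}} ,
\]
and the latter is bounded by the data via~(\ref{2.5}). For the $L^2$-part I would use the second equation in the form~(\ref{2.3}), namely $b_2u_2=\dive(a_2\bm{u}_2)-\rho_2$: because $b_2>0$ is a positive constant, this identity yields
\[
\|u_2\|_{L^2(D_0)}=\frac{1}{b_2}\,\|\dive(a_2\bm{u}_2)-\rho_2\|_{L^2(D_0)}
\leq\frac{1}{b_2}\bigl(\|\bm{u}_2\|_{X_{a_2}}+\|\rho_2\|_{L^2(D_0)}\bigr),
\]
where I have used $\|\dive(a_2\bm{u}_2)\|_{L^2(D_0)}\leq\|\bm{u}_2\|_{X_{a_2}}$. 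Adding the two bounds and invoking~(\ref{2.5}) once more produces~(\ref{2.8}). The same identity also pins $u_2$ down as an honest scalar field rather than merely up to a constant: once $\nabla u_2$ is fixed, $b_2u_2=\dive(a_2\bm{u}_2)-\rho_2$ removes the remaining constant ambiguity, which is what makes the $L^2$-bound and the uniqueness of $u_2$ consistent.

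I expect no real difficulty here, as the estimate is just bookkeeping of the $X_{a_2}$-norm against its two constituent pieces $\|\bm{u}_2\|_{L^2}$ and $\|\dive(a_2\bm{u}_2)\|_{L^2}$, with the positivity of the constant $b_2$ doing all the work in the $L^2$-bound for $u_2$. The one genuinely delicate point is the reconstruction of the scalar potential $u_2$ from the curl-free field $\bm{u}_2$ and the selection of the correct additive constant so that the coupling conditions on $\partial D_0$ in~(\ref{2.1}) are met; rather than reproduce this, I would simply quote \cite[Theorem~6.5]{CC06}, exactly as in the equivalence argument preceding Theorem~\ref{thm2.1}.
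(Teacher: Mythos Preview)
Your proposal is correct and follows essentially the same approach as the paper: the paper's proof is a single sentence invoking Theorem~\ref{thm2.1} together with the identity $u_2=b_2^{-1}\bigl(\dive(a_2\bm{u}_2)-\rho_2\bigr)\in L^2(D_0)$ coming from the second equation of~(\ref{2.1}), which is precisely the device you use to control $\|u_2\|_{L^2(D_0)}$. Your write-up simply spells out in more detail what the paper leaves implicit (existence, uniqueness, and the splitting of the $H^1$-norm).
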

\begin{proof}
(\ref{2.8}) is a direct consequence of Theorem \ref{thm2.1} by using the equality
$u_2=\dive(\g_2\bm{u}_2)-\rho_2\in L^2(D_0)$ in (\ref{2.1}).
\end{proof}

\section{Proofs of Theorem \ref{thm1.1}}\label{sec4}
\setcounter{equation}{0}

This section is devoted to the proof of Theorem \ref{thm1.1} when the conductivity $\g$ is described as a piecewise constant function (\ref{1.3a}). In this case, we can show that 
both the embedded obstacle $D$ and coefficients $c_j$, $j=1,2,\cdots,N$, in an unknown finite partition of $\Omega\se\ov{D}$ can be uniquely 
recovered by local boundary measurements $\mathcal{C}_{\g,D,\mathcal{B}}$ on an non-empty open subset $\G$ of $\partial \Omega$. A novel and elementary proof 
will be presented which depends only on the well-posedness of the boundary value problem
(\ref{2.1}).\\

{\bf Proofs of Theorem \ref{thm1.1}.}
For $j=1,2$, let $S_j:=\{\Omega_{j,\ell}: \ell=1,2,\cdots,N_j\}$ denote the sets consisting of a finite
number of subdomains of $\Omega$ with respect to two classes of different parameters 
$(\g_1,D_1,\mathcal{B}_1)$ and $(\g_2,D_2,\mathcal{B}_2)$. Without of loss of generality, we may assume that 
the boundary of $\Omega_{1,1}$ contains a non-empty open subset of 
the measurement boundary $\G$. By the assumption on the obstacle $D_j$, $j=1,2$,
there exists two real number $1\leq\kappa_1\leq N_1$ and $1\leq\kappa_2\leq N_2$ such that 
$D_1\subseteq\Omega_{1,\kappa_1}$ and $D_2\subseteq\Omega_{2,\kappa_2}$, respectively.
For convenience, we could still assume $\kappa_1,\kappa_2\geq2$.

In the following, we shall first prove $\g_2=\g_1$ in $\Omega_{1,1}$ by assuming 
$\mathcal{C}_{\g_1,D_1,\mathcal{B}_1}=\mathcal{C}_{\g_2,D_2,\mathcal{B}_2}$.
Since $\Gamma\cap \partial \Om_{1,1}\not=\emptyset$, we can choose a point $x_*\in (\Gamma\cap\partial\Om_{1,1})$, and define the domain 
$\mathcal{O}_\vep:=B_{\vep}(x_*)\cap \Om_{1,1}$ with sufficiently small $\vep>0$ so that 
both $\g_1$ and $\g_2$ are all constants in $\mathcal{O}_\vep$, due to the assumptions on
$\g_j$, $j=1,2$. Here, $B_{\vep}(x_*)$ denotes a ball centered at $x_*$ with radii $\vep>0$.
We will first prove that $\g_2=\g_1$ in $\mathcal{O}_\vep$ by contradiction.
Without loss of generality, assume that $\g_1>\g_2$ in $\mathcal{O}_\vep$.


For $\ell=1,2$, let $u^{(\ell)}_{j}$, $j\in\N_+$, be the solution to (\ref{1.1a}) with the Dirichlet data
$f(x):=f_j(x)=\chi_\delta(x)\Phi_n(x,x_j)$, 
corresponding to the case $(\g,D,\mathcal{B}):=(\g_\ell,D_\ell,\mathcal{B}_\ell)$. 
Here, $\Phi_n(\cdot,x_j)$ denotes the fundamental solution of the Laplace equation 
$-\tr \Phi_n(x,x_j) =\delta_{x_j}(x)$ in $\R^n$, given by 
\ben
\Phi_n(x,x_j)=
\left\{
 \begin{array}{ll}
\ds \;\;\frac{1}{4\pi}\frac{1}{|x-x_j|}, &\text{for}\;n=3,\\[5mm]
\ds 2\pi\ln|x-x_j|,& \text{for}\;n=2,
\end{array}
\right.
\enn
$\chi_\delta$ is a smooth cut-off function which is supported in $B_\delta(x_*)$ and satisfies  
$|\chi_\delta(x)|\leq 1$ for $x\in\R^n$ and $\chi_\delta(x)=1$ for $x\in B_{\delta/2}(x_*)$, 
and the point sequence $x_j$ are defined by 
\ben
x_j: = x_* + \frac{1}{j}\nu(x_*),\qquad j=J_0,J_0+1,J_0+2,\cdots\cdots,
\enn
where $J_0>0$ is chosen such that $x_j\in\R^n\se\ov{\Omega}$ for all $J_0+\N_+$.
Obviously, one has $f_j = 0$ on $\partial\Omega\se\ov{\G}$, if $\delta>0$ is sufficiently small. 
This means that 
$(f_j|_{\G}, u_j^{(1)}|_{\G}) \in \mathcal{C}_{\g_1,D_1,\mathcal{B}_1}$ and 
$(f_j|_{\G}, u_j^{(2)}|_{\G})\in \mathcal{C}_{\g_2,D_2,\mathcal{B}_2}$ for all large $j\geq J_0$.
Thus, we conclude by $\mathcal{C}_{\g_1,D_1,\mathcal{B}_1}=\mathcal{C}_{\g_2,D_2,\mathcal{B}_2}$ that $\Lambda_{\g_1,D_1,\mathcal{B}_1}f_j = \Lambda_{\g_2,D_2,\mathcal{B}_2}f_j$ on $\G$
for all large $j\geq J_0$.. Then, we consider the following boundary value problem
\be\label{5.2}
\left\{
 \begin{array}{ll}
\dive(\g_1 \nabla \Psi_1)-2 \Psi_1=\zeta_1,& \text{in}\;\mathcal{O}_\vep\\
\dive(\g_2 \nabla \Psi_2)- \Psi_2=\zeta_2,& \text{in}\; \mathcal{O}_\vep\\ 
\Psi_1-\Psi_2=\varrho_1, & \text{on}\;\partial \mathcal{O}_\vep\\
\g_1\partial_\nu \Psi_1- \g_2\partial_\nu \Psi_2=\varrho_2,
             &\text{on}\;\partial \mathcal{O}_\vep\\
\end{array}
\right.
\en
where $\zeta_1:=-2u^{(1)}_j$, $\zeta_2: = -u^{(2)}_j$, 
$\varrho_1: = u^{(1)}_j-u^{(2)}_j$ and $\varrho_2:=\g_1u^{(1)}_j-\g_2u^{(2)}_j$.
We intend to prove the well-posedness of (\ref{5.2}) by utilizing Corollary \ref{thm2.2}, if 
$\zeta_1,\zeta_2,\varrho_1$ and $\varrho_2$ can be shown to be bounded uniformly
for $j\geq J_0$ in the corresponding function spaces.

{\bf $\bullet$ The boundedness of $\zeta_1$ and $\zeta_2$.}
Since $\g_1$ is constant in $\mathcal{O}_\delta$, we can thus consider
the following Dirichlet problem
\be\label{5.3}
\Delta \Theta_j = 0\quad{\rm in\;}\Omega\se\ov{D_1},\qquad 
\Theta_j=f_j\quad{\rm in\;}\partial\Omega\quad{\rm and}\quad
\mathcal{B}_1\Theta_j=0\quad{\rm on\;}\partial D_1.
\en
It is noticed that $f_j(\cdot)=\Phi_n(\cdot,x_j)$ on $(B_{\delta/2}(x_*)\cap\partial\Omega)$, which
yields that the difference $\Pi_j(\cdot): = \Theta_j(\cdot)-\Phi_n(\cdot,x_j)$ in 
$\Omega\se\ov{D_1}$ is bounded uniformly for all $j\geq J_0$ under the classical $H^1$-norm. This is because $\Pi_j(\cdot)$
solves the harmonic equation $\Delta \Pi_j = 0$ in $\Omega\se\ov{D_1}$ with the boundary 
data $\Pi_j (\cdot)= \Theta_j(\cdot)-\Phi_n(\cdot;x_j)$ vanishing for $x\in (B_{\delta/2}(x_*)\cap\partial\Omega)$. Thus, the variational method can be applied to prove the above
assertion. By the definition of $\Phi_n(\cdot,x_j)$, one has 
\be\label{5.4}
\|\Theta_j \|_{L^2(\Omega\se\ov{D_1})}
+\|\Theta_j \|_{H^1(\Omega\se\ov{D_1\cup B_\vep(x_*)})}
\leq C,\quad {\rm for\; sufficiently\;}\vep>0,
\en
where $C>0$ is independent of the choice of $j\geq J_0$. The estimate (\ref{5.4})
leads in a similar way to that the functions $u_j^{(1)}$ have the same property 
\be\label{5.5}
\|u_j^{(1)} \|_{L^2(\Omega\se\ov{D_1})}
+\|u_j^{(1)} \|_{H^1(\Omega\se\ov{D_1\cup B_\vep(x_*)})}
\leq C
\en
uniformly for all $j\geq J_0$ by considering the difference $u^{(1)}_j(\cdot)-\Theta_j(\cdot)$
which satisfies the equation $\dive(\g_1\nabla w)=-\dive((\g_1-\g_1|_{\Omega_{1,1}})\nabla\Theta_j)$ in 
the domain $\Omega\se\ov{D_1}$ in the distribution sense with the homogeneous
boundary data $w=0$ on $\partial \Omega$ and $\mathcal{B}_1w=0$ on $\partial D_1$.
Hence, one has from (\ref{5.5}) that $\zeta_1$ is bounded in $L^2(\mathcal{O}_\delta)$ uniformly all $j\geq J_0$. The same assertion also holds for $\zeta_2$ by a similar discussion.

{\bf $\bullet$ The boundedness of $\varrho_1$ and $\varrho_2$.}
First,  by $\Lambda_{\g_1,D_1,\mathcal{B}_1}f_j = \Lambda_{\g_2,D_2,\mathcal{B}_2}f_j$ on $\G$,
it is known that $\g_1u_j^{(1)} = \g_2u_j^{(1)}$ on $\G$. Consequently, 
$\varrho_1=0$ and $\varrho_2=0$ on $\G$. We then construct a smooth function
$\eta\in C^\infty(\R^n)$ satisfying ${\rm supp}\eta\subseteq B_\delta(x_*)$, $|\eta|\leq1$ in $\R^n$ and $\eta=1$ in $B_{\delta/2}(x_*)$, where $\delta>0$ is sufficiently small such that 
$(B_\delta(x_*)\cap\G)\subseteq\G$. 
Define  
\ben
&&\mathbb{H}_1(x): = (1-\eta(x))(u_j^{(1)}(x)-u_j^{(2)}(x)),\quad {\rm in\;} \mathcal{O}_\vep,\\ 
&&\mathbb{H}_2(x): = (1-\eta(x))(\g_1u_j^{(1)}(x)-\g_2u_j^{(2)}(x)),\quad {\rm in\;} \mathcal{O}_\vep.
\enn
It is easy to check that $\varrho_1 = {\rm Tr}_0\mathbb{H}_1$ and 
\be\label{5.6}
\varrho_2 = {\rm Tr}_1\mathbb{H}_2 - (\partial_\nu(1-\eta))(\g_1u_j^{(1)}(x)-\g_2u_j^{(2)}(x)),
\en
where ${\rm Tr}_\ell$, $\ell=1,2$, denote the trace operators defined by 
${\rm Tr}_0 v = v|_{\partial\mathcal{O}_\vep}$ and ${\rm Tr}_1 v = (\partial_\nu v)|_{\partial\mathcal{O}_\vep}$ for a smooth function $v$. 
Due to $1-\eta=0$ in $B_{\delta/2}(x_*)$, it follows from (\ref{5.5}) that 
\be\label{5.7}
\|{\rm Tr}_1\mathbb{H}_2-\varrho_2\|_{H^{-1/2}(\partial\mathcal{O}_\vep)}+
\|\mathbb{H}_1\|_{H^1(\mathcal{O}_\vep)} +\|\mathbb{H}_2\|_{H^1(\mathcal{O}_\vep)} 
\leq C_\eta
\en
for all $j\geq J_0$. Furthermore, using the equations 
$\Delta u^{(\ell)}_j=0$, one has
$\Delta \mathbb{H}_2=\Delta(1-\eta)(\g_1u_j^{(1)}-\g_2u_j^{(2)})
+2\nabla(1-\eta)\cdot\nabla(\g_1u_j^{(1)}-\g_2u_j^{(2)})$ 
which leads to that 
\be\label{5.8}
\|\Delta \mathbb{H}_2\|_{L^2(\mathcal{O}_\vep)} \leq C_\eta.
\en
With the aid of the trace theorems, we arrive from (\ref{5.7})-(\ref{5.8}) at that
\be\label{5.9}
\|\varrho_1\|_{H^{1/2}(\partial\mathcal{O}_\vep)}
+\|\varrho_2\|_{H^{-1/2}(\partial\mathcal{O}_\vep)}
\leq C_\eta,
\en
which finishes the second assertion.

In (\ref{5.2}), let $a_1:=\g_1$, $a_2:=\g_2$, $b_1:=2$ and $b_2:=1$. Obviously, 
$a_1,a_2,b_1,b_2$ satisfy the conditions in Theorem \ref{thm1.1},
due to $\g_1>\g_2$ in $\mathcal{O}_\vep$. It is then shown by Corollary \ref{thm2.2} that
\be\label{5.10}
\sum_{\ell=1}^2\|\Psi_\ell\|_{H^1(\mathcal{O}_\vep)}
\leq C(\|\zeta_1\|_{L^2(\mathcal{O}_\vep)}+\|\zeta_2\|_{L^2(\mathcal{O}_\vep)}
+\|\varrho_1\|_{H^{1/2}(\partial\mathcal{O}_\vep)}
+\|\varrho_2\|_{H^{-1/2}(\partial\mathcal{O}_\vep)}) 
\leq C
\en
uniformly for $j\geq J_0$.
Notice that, for boundary data defined in (\ref{5.2}), $(\Psi_1,\Psi_2):=(u_j^{(1)},u_j^{(1)})$
is the unique solution to (\ref{5.2}). Using the trace theorem again, it follows from the above
equality that 
\ben
+\infty\longleftarrow\|f_j\|_{H^{1/2}(\partial\mathcal{O}_\vep\cap\G)}
\leq\|u_j^{(1)}\|_{H^1(\mathcal{O}_\vep)}\leq C,\quad{\rm as\;}j\to\infty,
\enn
which clearly leads to a contradiction. Hence, $\g_1=\g_2$ in $\mathcal{O}_\vep$.

If $\Om_{1,1}\subseteq \Om_{2,\ell_0}$ for some $j_0\leq N_2$, then one immediately has 
$\g_2=\g_1$ in $\Om_{1,1}$. Otherwise, we shall claim that it must hold 
\be\label{5.1}
\ov{\Omega_{1,1}} &=& \bigcup_{1\leq j\leq M}\ov{\Omega_{2,\ell_j}}\quad {\rm for\;some\;}M\leq N_2,
\en
which means that $\Om_{1,1}$ consists of a finite number of elements in $S_2$.
We shall prove (\ref{5.1}) by contradiction. 
Assume that it does not hold true for (\ref{5.1}). Then, there exists some subdomain denoted by 
$\Om_{2,j_1}$ in $ S_2$ such that 
$(\Om_{1,1}\cap \Om_{2,j_1})\not=\emptyset$, $\Om_{1,1}\nsubseteq \Om_{2,j_1}$ and 
$\Om_{2,j_1}\nsubseteq \Om_{1,1}$. 
So we can choose two subsets $A_1$ and $A_2$ of $\R^n$ satisfying the following conditions:
${\rm i})$  $A_1\subseteq(\Om_{1,1}\cap \Om_{2,j_1})$; ${\rm ii})$ $A_2\subseteq(\Om_{1,2}\cap \Om_{2,j_1})$ with $\emptyset\not=\partial A_2\cap(\partial\Om_{1,1}\cap\partial\Om_{1,2})$ 
having a non-empty open subset, denoted by $\Sigma$, in the low-dimensional manifold of $\R^n$. Here, $\Om_{1,2}$ denotes the element in $S_1$ which shares a non-empty common partial boundary with $\Om_{1,1}$.
We now take a point $p$ at the boundary $\partial A_1\cap \partial\Om_{2,j_1}$. Recalling 
the assumption on the domain $\Om\se\ov{D}_2$, we can always find a bounded curve $\ell_{x_*,p}\subseteq \Om_{1,1}$ with two endpoints $x_*$ and $p$ such that $\ell_{x_*,p}\cap A$ ($A\in S_2$) contains at most one point. Therefore, there exists a finite number of small balls $B_{\vep_1}(p_j)$ ($1\leq j\leq M_1<\infty$) in $\Om_{1,1}$ with $p_j\in\ell_{x_*,p}$, $p_1=x_*$, $p_{M_1}=p$ and sufficiently small
$\vep_1>0$ such that $B_{\vep_1}:=\bigcup_{1\leq j\leq M_1}B_{\vep_1}(p_j)\supset \ell_{x_*,p}$.

Since $\g_1$ is constant in $\Om_{1,1}$, we let $\g_1(x):=c_{1,1}\in\R_+$ in $\Om_{1,1}$. 
We next claim that $\g_2(x)=c_{1,1}$ in $A'_1: = A_1\cap B_{\vep_1}(p)$. Without loss of generality,
it can be assumed that $\g_2(x) =c_{1,1}$ in $B_{\vep_1}\se\ov{\Om_{2,j_1}}$.
For $\ell=1,2$, we now consider the Dirichlet-Green functions $G_\ell(\cdot, y)$, $y\in(\Om\se\ov{D_\ell})$,
which solves the following boundary value problem
 \ben
\left\{
 \begin{array}{ll}
\dive(\g_\ell\nabla G_\ell(\cdot,y)) = -\delta_y(\cdot)& \text{in}\;\Om\se\ov{D_\ell},\\[1.5mm]
 G_\ell(\cdot,y)=0& \text{on}\; \partial \Omega,\\ [1.5mm]
\mathcal{B}_\ell G_\ell(\cdot,y)=0& \text{on}\;\partial D_\ell.
\end{array}
\right.
\enn 
It is easily checked that $G_\ell(\cdot,y)$ has the same singularity with the fundamental solution
$\Phi_n(\cdot,y)$ at $x=y$.
For any $f\in H^{1/2}(\partial\Omega)$ with ${\rm supp}f\subseteq \G_1:=(\Gamma\cap\partial\Om_{1,1})$,
let $u^{(\ell)}$, $\ell=1,2$, be the solutions to (\ref{1.1a}) with the boundary data $f$, corresponding to 
two classes of different parameters $(\g_1,D_1,\mathcal{B}_1)$ and $(\g_1,D_2,\mathcal{B}_2)$.
Using the representation formulation for $u^{(\ell)}$, $\ell=1,2$, yields 
 \be\no
 u^{(\ell)}(x)
 &=&\int_{\partial \Omega}\g_\ell(y)\left(u^{(\ell)}(y)\frac{\partial G_\ell(x,y)}{\partial\nu(y)}
 -\frac{\partial u^{(\ell)}(y)}{\partial\nu(y)}G_\ell(x,y)\right){\rm d}s(y)\\ \label{5.11}
 &=&\int_{\G_1}\frac{\partial G_\ell(x,y)}{\partial\nu(y)}\g_\ell(y) f(y){\rm d}s(y)\qquad\quad {\rm for\;}x\in B_{\vep_1}(p)\se\ov{\Om_{2,j_1}}.
 \en
Recalling $\Lambda_{\g_1,D_1,\mathcal{B}_1}f = \Lambda_{\g_2,D_2,\mathcal{B}_2}f$, the unique continuation principle can be thus applied for the Laplace equation to deduce $u^{(1)}(x)=u^{(2)}(x)$ in 
$\mathcal{O}_\vep \cap B_{\vep_1}(x_*)$. Then, it holds by turns that $u^{(1)}(x)=u^{(2)}(x)$ in
$B_{\vep_1}\se\ov{\Om_{2,j_1}}$.
Thus, it is shown by (\ref{5.11}) in combination with the fact $\g_1(x)=\g_2(x)$ in $\mathcal{O}_\vep$ that 
\be\label{5.12}
\int_{\G_1} \frac{\partial G_1(x,y)}{\partial\nu(y)} f(y) {\rm d}s(y)
=\int_{\G_1} \frac{\partial G_2(x,y)}{\partial\nu(y)}f(y) {\rm ds}(y)
\en
 for $x\in B_{\vep_1}(p)\se\ov{\Om_{2,j_1}}$ and all $f\in H^{1/2}(\partial\Omega)$ with the support in 
 $\G_1$. This means that $\partial_\nu G_1(x,\cdot)=\partial_\nu G_2(x,\cdot)$ on $\G_1$. Moreover,
 one can also concalud from the similar discussion above (\ref{5.12}) that 
  \be\label{5.13}
 G_1(x,y)=G_2(x,y)\qquad {\rm for\;}x, y\in B_{\vep_1}\se\ov{\Om_{2,j_1}} \;{\rm and \;}\; x\neq y.
 \en

We are at a position to show that $\g_2(x) = c_{1,1}$ in $A'_1$, which will be proved by contradiction.
Assume that $\g_2(x)\neq c_{1,1}$ in $A'_1$. Without loss of generality, we may assume 
$\g_1>\g_2$ in $A'_1$. Similar to (\ref{5.2}), 
we instead consider the following boundary value problem
 \be\label{5.14}
\left\{
 \begin{array}{ll}
\dive(\g_1 \nabla \Psi_1)-2 \Psi_1=\zeta_1,& \text{in}\;A'_1\\
\dive(\g_2 \nabla \Psi_2)- \Psi_2=\zeta_2,& \text{in}\; A'_1\\ 
\Psi_1-\Psi_2=\varrho_1, & \text{on}\;\partial A'_1\\
\g_1\partial_\nu \Psi_1- \g_2\partial_\nu \Psi_2=\varrho_2,
             &\text{on}\;\partial A'_1\\
\end{array}
\right.
\en
with $\zeta_1,\zeta_2,\varrho_1$ and $\varrho_2$ defined by
\ben
&&\zeta_1:=-2G_1(\cdot,x_j),\quad\zeta'_2:=-G_2(\cdot,x_j),\\
&&\varrho_1:=G_1(\cdot,x_j)-G_2(\cdot,x_j),\quad
\varrho_2:=\g_1\partial_\nu G_1(\cdot,x_j)-\g_2\partial_\nu G_2(\cdot,x_j),
\enn
respectively, 
where $x_j$ are defined by $x_j:=p+(1/j)\nu(p)\in B_{\vep_1}(p)\se\ov{\Om_{2,j_1}}$ for all large $j>0$.
%

For $\zeta_j, j=1,2,$ it follows from 
the properties of the Dirichlet-Green's function (cf.\cite{Is90}) that there exists a fixed constant $K>0$, depending only on the coefficients of the conductivity equation, such that 
$K^{-1}\Phi_n(x,y) \leq G_1(x,y),G_2(x,y)\leq K\Phi_n(x,y)$ for all $x,y\in A'_1$.
This leads to that $\zeta_1$ and $\zeta_2$ are bounded in $L^2(A'_1)$
uniformly for sufficiently large $j>0$.

Furthermore, by the equality (\ref{5.13}) and transmission conditions on $\partial A'_1\cap\partial\Om_{2,j_1}$, we observe that $\varrho_1=0$ and $\varrho_2=0$ on $\partial A'_1\cap\partial\Om_{2,j_1}$.
Thus, in order to obtain the
uniform boundedness of $\varrho_1$ and $\varrho_2$ in $H^{1/2}(\partial A'_1)$
and $H^{-1/2}(\partial A'_1)$, respectively, 
 it is enough to estimate $G_1(\cdot,x_j)$ and $G_2(\cdot,x_j)$ in the sense of 
$H^1(A'_1\se\ov{B_{\vep'_1}(p)})$ with $\vep'_1=\vep_1/2$ (see the analysis between (\ref{5.6}) and (\ref{5.9})). Obviously, this holds true due to the properties of the Dirichlet-Green's functions.
Corollary \ref{thm2.2} can be now applied to obtain the estimate
\be\label{5.15}
\|\Psi_1\|_{H^1(A'_1)} +\|\Psi_2\|_{H^1(A'_1)} \leq C
\en
uniformly for sufficiently large $j>0$. Noticing that for each $j>0$, 
$(\Psi_1,\Psi_2):=(G_1(\cdot,x_j),G_2(\cdot,x_j))$ is the unique solution to (\ref{5.14}), one has from (\ref{5.15}) that $G_1(\cdot,x_j)$ is bounded in $H^1(A'_1)$ uniformly for all sufficiently $j>0$. 
However, this is contradicted with the fact 
\ben
\|G_1(\cdot,x_j)\|_{H^1(A'_1)}\to\infty\qquad {\rm as \;}j\to\infty.
\enn
Therefore, one has $\g_2(x)=c_{1,1}$ in $A'_1$, which further yields $\g_2(x)=c_{1,1}$ in $\Om_{2,j_1}$, 
since $A'_1\subset A_1\subset (\Om_{1,1}\cap \Om_{2,j_1})$ and $\g_2(x)$ is constant in $\Om_{2,j_1}$.

Based on the above assertion, we further claim that  $\g_1(x)=\g_2(x)$ in $A_2$, a previously chosen subdomain in condition ${\rm ii})$ after (\ref{5.1}), which is sill proved by contradiction.
Suppose that $\g_1\not=\g_2$ in $A_2$. Without loss of generality, we can assume that $\g_1>\g_2$ in
$A_2$. We now take a new point $p_*\in\Sigma$ (see the definition 
in condition ${\rm ii})$), and define the point sequence $y_j: = p_*+(1/j)\nu(p_*)$ for sufficiently large $j>0$ such that $y_j\in(\Om_{1,1}\cap B_{\vep_2}(p_*))$ for sufficiently small $\vep_2>0$.
Following similar discussions as in deriving (\ref{5.13}), we can first prove that 
\be\label{5.16}
G_1(x,y_j)=G_2(x,y_j)\qquad {\rm for\;}x_j\not=x\in \Om_{1,1}\cap B_{\vep_2}(p_*).
\en
 for each large $j>0$,
This motivates us to consider another new boundary value problem in the subdomain $A'_2:=A_2\cap B_{\vep_2}(p_*)$: \be\label{5.17}
\left\{
 \begin{array}{ll}
\dive(\g_1 \nabla \Psi_1')-2 \Psi_1'=\zeta_1',& \text{in}\;A'_2\\
\dive(\g_2 \nabla \Psi_2')- \Psi_2'=\zeta_2',& \text{in}\; A'_2\\ 
\Psi_1'-\Psi_2'=\varrho_1', & \text{on}\;\partial A'_2\\
\g_1\partial_\nu \Psi_1'- \g_2\partial_\nu \Psi_2'=\varrho_2',
             &\text{on}\;\partial A'_2\\
\end{array}
\right.
\en
which is similar to (\ref{5.14}) with $A_1'$ and $\zeta_1,\zeta_2,\varrho_1,\varrho_2$ replaced by
$A_2'$ and 
\ben
&&\zeta'_1:=-2G_1(\cdot,y_j),\quad\zeta'_2:=-G_2(\cdot,y_j),\\
&&\varrho'_1:=G_1(\cdot,y_j)-G_2(\cdot,y_j),\quad
\varrho'_2:=\g_1\partial_\nu G_1(\cdot,y_j)-\g_2\partial_\nu G_2(\cdot,y_j),
\enn
respectively.

Using the properties on the Dirichlet-Green's functions again yields that both $\zeta_1'$ and $\zeta_2'$ are
bounded in $L^2(A_2')$ uniformly for all large $j>0$. Moreover, (\ref{5.16}) in combination with the transmission conditions on $\Sigma$ leads to $\varrho_1'=\varrho_2=0$ on $\partial A_2'\cap\Sigma$.
This makes that $\varrho_1'$ and $\varrho_2'$ are bounded in $H^{1/2}(\partial A'_2)$
and $H^{-1/2}(\partial A'_2)$, respectively, uniformly for all large $j>0$; see the analysis between (\ref{5.14})
and (\ref{5.15}). Corollary \ref{thm2.2} can be again applied to obtain the estimate
\be\label{5.18}
\|\Psi_1'\|_{H^1(A'_2)} +\|\Psi_2'\|_{H^1(A'_2)} \leq C
\en
uniformly for sufficiently large $j>0$. However, this is a contradiction since $(\Psi_1',\Psi_2'):=(G_1(\cdot,y_j),G_2(\cdot,y_j))$ is the unique solution to (\ref{5.17}) and 
\ben
\|G_1(\cdot,y_j)\|_{H^1(A'_2)}\to\infty,\qquad {\rm as \;}j\to\infty.
\enn
Therefore, we arrive at that $\g_1=\g_2$ in $A_2$. Notice that this is impossible since
$\g_2$ has been proved to be $c_{1,1}$ in $A_2$ and $\g_1:=c_{1,2}$, a new constant, is assumed to
be not $c_{1,1}$ (see the conditions between (\ref{1.3a}) and (\ref{1.4a})). Hence, (\ref{5.1}) holds true.

It still needs to be shown that $\g_2=c_{1,1}$ in $\Om_{1,1}$. By (\ref{5.1}), we can easily choose 
some subdomain denoted by $\Om_{2,l_1}$ such that $\Om_{2,l_1}\subseteq\Om_{1,1}$ and
$\G\cap\partial\Om_{2,l_1}\not=\emptyset$. That is, $\partial\Om_{2,l_1}$ and $\G$ share a non-empty 
open subset in a ($n-1$)-dimensional manifold of $\R^n$. By repeating the above process, one can conclude
$\g_2=c_{1,1}$ in $\Om_{2,l_1}$. If $\Om_{1,1}=\Om_{2,l_1}$, then the proof is ended. If not, we can
choose another subdomain denoted by $\Om_{2,l_2}$ such that $\Om_{2,l_2}\subseteq\Om_{1,1}$ 
and $\emptyset\not=\partial\Om_{2,l_1}\cap\partial\Om_{2,l_2}$ containing a non-empty open subset
in ($n-1$)-dimensional manifold of $\R^n$. Again repeating the previous analysis leads to 
$\g_2=c_{1,1}$ in $\Om_{2,l_2}$. If $\Om_{1,1}=(\Om_{2,l_1}\cup\Om_{2,l_2})$, then the proof is ended.
If not, we continue to choose a new subdomain denoted by $\Om_{2,l_3}$ such that 
$\Om_{2,l_3}\subseteq\Om_{1,1}$, and $\emptyset\not=\partial\Om_{2,l_3}\cap\partial\Om_{2,l_2}$
or $\emptyset\not=\partial\Om_{2,l_3}\cap\partial\Om_{2,l_1}$ contains at least a non-empty open subset
in ($n-1$)-dimensional manifold of $\R^n$; see Figure 2 for a geometry setting. The similar arguments
leads to $\g_2=c_{1,1}$ in $\Om_{2,l_3}$ once again. Thus, we can arrive in turn at 
$\g_2=c_{1,1}$ in $\Om_{1,1}$ since $M$ in (\ref{5.1}) is just a bounded positive integer.


Finally, the inverse conductivity problem can be reduced in a similar way to the unique determination of an unknown obstacle
embedded into a known background medium. The standard arguments as in \cite{KP98} implies that
$D_1=D_2$ and $\mathcal{B}_1=\mathcal{B}_2$. The proof is thus complete.

\section*{Acknowledgements}

This work is supported by the NNSF of China under Grant No. 11771349.


\end{document}